\numberwithin{equation}{section}
\theoremstyle{plain}
\newtheorem{theorem}{Theorem}[section]
\newtheorem{lemma}[theorem]{Lemma}
\theoremstyle{definition}
\newtheorem{definition}[theorem]{Definition}
\newtheorem{notation}[theorem]{Notation}
\theoremstyle{remark}
\newtheorem{case[theorem]}{Case}
\begin{document}
\title{A weak type estimate for regular fractional\\ sparse operators}

\author{
Ji Li\thanks{ Department of Mathematics and Statistics, Macquarie University. Email: {\tt ji.li@mq.edu.au}}
\and 
Chung-Wei Liang\thanks{Department of Mathematics, MD of NTU. Email: {\tt chongweiliang1228@gmail.com}}
\and
Chun-Yen Shen \thanks{Department of Mathematics,  National Taiwan University and NCTS. Email: {\tt cyshen@math.ntu.edu.tw}}
}

\maketitle  
\date{}

%\author{Ji Li\and Chung-Wei Liang\and Chun-Yen Shen}
%\maketitle
\begin{abstract}
In this note the weak type estimates for fractional integrals are studied. More precisely, we adapt the arguments of Domingo-Salazar,  Lacey, and  Rey to obtain improvements for the end point weak type estimates for regular fractional sparse operators. 

\end{abstract}
\section{Introduction}
Let $M$ denote the Hardy--Littlewood maximal function. The Muckenhoupt--Wheeden Conjecture stems from vector-valued estimates for the maximal function. In 1971, C. Fefferman and E. Stein \cite{FS} showed that
\begin{align}
	   \|Mf\|_{L^{1,\infty}(w)} \lesssim \int_{\mathbb{R}^n} |f| M w (x)dx, \label{eq:0.0}
\end{align}
or equivalently
\[
L^1(Mw)\xrightarrow{\quad \text{M} \quad} L^{1,\infty}(\omega),
\]
for arbitrary weight $w$. Throughout this note a weight $w$ is a nonnegative locally integrable function. The above relation gave rise to the following natural question, formulated by Muckenhoupt and Wheeden in $70's$ : Suppose that T is a  Calder\'on--Zygmund singular integral operator, can we have the same mapping property if we replace $M$ in $\eqref{eq:0.0}$ by $T$, that is 
\begin{align}
	    L^1(Mw)\underset{?}{\xrightarrow{\quad \text{T} \quad}} L^{1,\infty}(w). 
\end{align} \\
This problem, known as the \textbf{Muckenhoupt--Wheeden Conjecture}, remained puzzled for quite some time, and was believed to be true. In addition, during the development of this open problem, C. P\'erez \cite{Per} verified that
\begin{align}
	    L^1(M_{L(logL)^{\varepsilon}}w) \xrightarrow{\quad \text{T} \quad} L^{1,\infty}(w).  \label{eq:0.1}
\end{align} 
In particular, if $M^2$ denotes the second iteration of M, then 
\begin{align}
	    L^1(M^2\omega) \xrightarrow{\quad \text{T} \quad} L^{1,\infty}(\omega).  \notag
\end{align} 
We remark here that $ M_{L(logL)^{\varepsilon}}$ is a smaller object than $M^2$ pointwise. Although there are so many remarkable progresses on this problem, no one can achieve the origin estimate. The breakthrough that surprised people is that the Muckenhoupt--Wheeden Conjecture was finally shown to be false in a series of works of M. Reguera and C. Thiele \cite{Har}, \cite{Hil}. The question we are concerned in this note is based on the point of view of $\eqref{eq:0.1}$ for
fractional integral operators $I_\alpha$. It was C. P\'erez \cite{Car} who proved that 
\begin{align}
    L^1(M_\alpha(M_{L(\log L)^\delta}w))\xrightarrow{\quad I_\alpha \quad} L^{1,\infty}(w) ,\,\forall \delta\in (0,1),\label{eq:1.3.1}
\end{align}
for general weight $w$. In the manner of logarithm scale estimate,  Domingo-Salazar,  Lacey, and  Rey \cite{Lac} obtained a better estimate. They imposed an assumption to the Young function $\phi$ so that for Calder\'on--Zygmund operator $T$, any weight $w$ on $\mathbb{R}^d$, it holds that
\[
\underset{\lambda>0}{\sup}\lambda w\left(T^{*}f>\lambda\right)\lesssim c_\phi\int_{\mathbb{R}^d}|f(x)|M_{\phi(L)}w(x)dx.
\]
The proofs in their work can be extended to other dyadic operators which bring us to have the following observation.

To begin with, let $\varphi:[0,\infty)\rightarrow [0,\infty)$ be a \emph{Young function}, that is, a convex, increasing function such that $\varphi(0)=0$ and $\lim_{t\rightarrow\infty}\varphi(t)=\infty$. From these properties, on can deduce that its inverse $\varphi^{-1}$ exists on $(0,\infty)$. Moreover, given a Young function, we can define its complementary function $\psi$ by
\begin{equation*}
\psi(s)=\sup_{t>0}\,\{st-\varphi(t)\}.
\end{equation*}
We will assume that $\lim_{t\rightarrow \infty} \varphi(t)/t=\infty$ to ensure that $\psi$ is finite valued. Under these conditions, $\psi$ is also a Young function and it is associated with the dual space of $\varphi(L)$.

We now state our main result.
\begin{theorem}
Let $\phi$ be any Young function satisfying
\[
c_\phi:=\sum_{k=1}^\infty \frac{1}{\psi^{-1}(2^{2^k})}<\infty,
\]
where $\psi$ is the complementary function of $\phi$.
Then, for any $N$-regular sparse family of cubes $\mathcal{S}$ and any weight $w$ on $\mathbb{R}^d$
\begin{align}
    \left\|\mathcal{A}^\mathcal{S}_{\alpha,\nu} f\right\|_{L^{1,\infty}(w)}\lesssim c_\phi\cdot\left\|f\right\|_{L^1(M_\alpha(M_\phi w))},~\forall \nu\geq1.\label{eq:0.2}
\end{align}
\end{theorem}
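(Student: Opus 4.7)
The plan is to adapt the stopping-time and Orlicz decomposition of Domingo-Salazar, Lacey, and Rey to the fractional setting. By homogeneity we normalize the level to $\lambda = 1$; by monotone convergence we may assume $f \ge 0$ and that $\mathcal{S}$ is finite; the parameter $\nu \ge 1$ enters only through a power-type average $\langle f \rangle_{Q,\nu}$ and is handled by a preliminary H\"older step. The target reduces to
\[
w\bigl(\{x : \mathcal{A}^{\mathcal{S}}_{\alpha,\nu}f(x) > 1\}\bigr) \;\lesssim\; c_\phi \int f\, M_\alpha(M_\phi w)\,dx.
\]
A standard first move is to peel off the exceptional set $\{M_\alpha f > \eta\}$ for a small $\eta$, whose $w$-measure is already controlled by the Fefferman--Stein type bound for the fractional maximal operator. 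Thereafter one works only with cubes $Q\in\mathcal{S}$ whose fractional average $|Q|^{\alpha/d}\langle f\rangle_{Q,\nu}$ is of order $1$.

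On those cubes I would run a principal-cube construction: collect into $\mathcal{F}$ the maximal sparse cubes on which the fractional average has doubled, so that every $Q$ has a nearest principal ancestor $F = \pi(Q)$ with $\langle f\rangle_{Q,\nu}\sim \langle f\rangle_{F,\nu}$. Within each tree $\{Q : \pi(Q) = F\}$ I would group cubes by their \emph{tower length} $n(Q)$, the number of sparse cubes strictly between $Q$ and $F$. The key doubly-exponential grouping places the cubes with $2^{k-1} \le n(Q) < 2^k$ in the $k$-th band $\mathcal{S}_k(F)$. The $N$-regularity of $\mathcal{S}$ is what keeps $n(Q)$ comparable to the genuine sparse generation, and combined with sparsity it yields that the set where the $k$-th band exceeds level $2^{-k}$ occupies measure $\lesssim 2^{-2^k}|F|$ inside $F$.

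The core estimate is then Orlicz--H\"older against this small set. For $G \subset F$ with $|G| \lesssim 2^{-2^k}|F|$, duality for the $(\phi,\psi)$ pair gives
\[
\frac{1}{|F|}\int_G w \;\lesssim\; \|w\|_{\phi, F}\cdot \frac{1}{\psi^{-1}(|F|/|G|)} \;\lesssim\; \frac{\|w\|_{\phi, F}}{\psi^{-1}(2^{2^k})}.
\]
Multiplying by $|F|^{\alpha/d}\langle f\rangle_{F,\nu}$ coming from the grouped sparse sum, using the pointwise bound $|F|^{\alpha/d}\|w\|_{\phi, F}\lesssim \inf_F M_\alpha(M_\phi w)$, and summing first in $k$ produces precisely $\sum_{k} 1/\psi^{-1}(2^{2^k}) = c_\phi$. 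Finally, summing over $F \in \mathcal{F}$ and invoking the disjointness of the principal-cube sparse pieces (a standard consequence of the stopping construction) yields the claimed bound $c_\phi \int f\, M_\alpha(M_\phi w)$.

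The hard part will be tracking the fractional factor $|Q|^{\alpha/d}$ through the doubly-exponential grouping. In the non-fractional case of Domingo-Salazar--Lacey--Rey one works with an essentially scale-free Orlicz ball $\|w\|_{\phi,F}$, whereas here the factor $|F|^{\alpha/d}$ must be absorbed into the weight \emph{before} Orlicz--H\"older is applied, so that it lands on $M_\alpha(M_\phi w)$ rather than on a weaker composition of maximal operators such as the one in \eqref{eq:1.3.1}. Keeping the $N$-regularity uniform as the tower length $n(Q) \approx 2^k$ grows --- which is what secures the geometric $2^{-2^k}|F|$ measure bound in the band decomposition --- is the delicate combinatorial point that has to be reconciled with this fractional absorption.
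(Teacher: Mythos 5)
Your overall architecture (peel off $\{M_\alpha f>\eta\}$, organize the sparse family so that a doubly-exponentially small set appears, apply Orlicz--H\"older duality to produce $1/\psi^{-1}(2^{2^k})$, sum in $k$) is the right one, but the specific decomposition you propose has a quantitative gap that the paper's argument is designed precisely to avoid. In your $k$-th band $\mathcal{S}_k(F)=\{Q:\,2^{k-1}\le n(Q)<2^k\}$ inside a principal tree, a point $x$ can lie in up to $2^{k-1}$ cubes of the band (one per generation), each contributing an average $\sim\langle f\rangle_{\alpha,F}$. So the band's contribution to $\int_\varepsilon \mathcal{A}f\,w$ is of size $\langle f\rangle_{\alpha,F}\cdot 2^{k-1}\cdot w(G_k)$ with $|G_k|\lesssim 2^{-c2^{k}}|F|$, and after Orlicz--H\"older you are left with $\sum_k 2^{k}/\psi^{-1}(2^{2^{k}})$ rather than $c_\phi=\sum_k 1/\psi^{-1}(2^{2^k})$. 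These are genuinely different conditions (take $\psi^{-1}(2^{2^k})\approx k^2$). Your thresholding at level $2^{-k}$ does not repair this: whether the super-level set $\{\text{band }k>2^{-k}\}$ has measure $2^{-2^k}|F|$ depends on the size of $\langle f\rangle_{\alpha,F}$, which in a principal-cube construction is decoupled from $k$. The paper (following Domingo-Salazar--Lacey--Rey) instead stratifies by the size of the average, $\mathcal{S}_k=\{Q:\langle f\rangle_{\alpha,Q}\sim\Lambda_1^{-k}\}$, takes depth cutoff $u=2^k$ \emph{tied to that} $k$, absorbs the first $2^k$ layers into $\frac{2^k}{\Lambda_1^k}w(\varepsilon)$ (summable since $\Lambda_1>2$, then hidden on the left-hand side because $\varepsilon$ lies in a level set of the operator), and treats the deep part not by multiplying multiplicity against measure but via the disjoint sets $E_Q=Q\setminus\bigcup_{P\in Ch_{\mathcal{S}_k}(Q)}P$ together with $\langle f\rangle_{\alpha,Q}\lesssim\langle f\chi_{E_Q}\rangle_{\alpha,Q}$, which converts the whole deep sum into a single integral of $f\cdot M_\alpha(M_{\phi(L)}w)$ with no multiplicity loss.

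A second, related point: you have misidentified where $N$-regularity enters. It is not needed to compare tower length with sparse generation; it is needed exactly for the lower bound $\langle f\chi_{E_Q}\rangle_{\alpha,Q}\gtrsim\langle f\rangle_{\alpha,Q}$. One has
\begin{equation*}
\langle f\chi_{E_Q}\rangle_{\alpha,Q}=\langle f\rangle_{\alpha,Q}-\sum_{P\in Ch(Q)}\Bigl(\tfrac{|P|}{|Q|}\Bigr)^{1-\frac{\alpha}{d}}\langle f\rangle_{\alpha,P},
\end{equation*}
and since $t\mapsto t^{1-\alpha/d}$ is concave for $\alpha>0$, the sum $\sum_P(|P|/|Q|)^{1-\alpha/d}$ can be large even when $\sum_P|P|/|Q|$ is small; only a bound $\#Ch(Q)\le N$ (via H\"older) lets one dominate it by $N^{\alpha/d}\bigl(\sum_P|P|/|Q|\bigr)^{1-\alpha/d}$ and keep the children from carrying the entire fractional average. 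This is the genuinely fractional obstruction (absent when $\alpha=0$), and it is the step your "fractional absorption" discussion would need to confront.
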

It is worth to mention that the original open problem regarding $\eqref{eq:1.3.1}$ which was formulated in the work of C. P\'erez \cite{Car} is that whether the following is true:
 
\begin{align}
    L^1(M_\alpha(Mw))\underset{?}{\xrightarrow{\quad I_\alpha \quad}} L^{1,\infty}(w) ,\,\forall \delta\in (0,1).\label{eq:1.3.2}
\end{align}
It has been proved in \cite{Car} that 
\begin{align}
    L^1(M_\alpha(M_{L(\log L)^\delta}w))\xrightarrow{\quad I_\alpha \quad} L^{1,\infty}(w) ,\,\forall \delta\in (0,1).\notag
\end{align}
However,
\[
{L^1(M_{\alpha}w) \overset{{ I_\alpha }}{\longarrownot\longrightarrow} L^{1,\infty}(w)},
\]
for general weight $w$. Observe that
\[
M_\alpha(M_{L(\log L)^\delta}w)\geq M_\alpha(M_Lw)=M_\alpha(Mw).
\]
In summary, for the Calder\'on--Zygmund operator $T$ and the fractional integral operator $I_\alpha$, what we have known is that
\[
\begin{cases}
L^1(M_\alpha(M_{L(\log L)^\delta}w))\xrightarrow{\quad I_\alpha \quad} L^{1,\infty}(w), \\
L^1(M_{L(\log L)^\delta}w)\xrightarrow{\quad T \quad} L^{1,\infty}(w) 
\end{cases}
\]
and
\[
 L^1(Mw)\overset{\text{T}}{\longarrownot\longrightarrow}  L^{1,\infty}(w).
\]
Our main result states that we can have smaller logarithm scales for any $N$-regular fractional sparse operators.

\section{Dyadic grid and sparse operators}
\begin{definition}[Dyadic Grid]~\\
A dyadic grid, denoted $\mathcal{D}$, is a collection of cubes in $\mathbb{R}^d$ with the following properties:
\begin{itemize}
\item for any $Q \in \mathcal{D} $, there is $k \in \mathbb{Z}$ such that $|Q|=2^{kd}$;
\item if $Q$,$P$ $\in$ $\mathcal{D}$, then $Q\cap P$
$=$ $Q$,\,$P$,or $\varnothing$;
\item for each $k \in \mathbb{Z}$, the family $\mathcal{D}_k:=\{Q \in \mathcal{D}: |Q|=2^{kd}\}$ forms a partition of $\mathbb{R}^d$.
\end{itemize}
\end{definition}

\smallskip
\begin{notation}~
\begin{itemize}
    \item Given any subset $\mathcal{S}\subset\mathcal{D}$, $\mathrm{Ch}_\mathcal{S}(Q)\subset\mathcal{S}$ denote the \textbf{immediate successors}/\textbf{children} of $Q\in\mathcal{D}$;
    \item $\mathbb{P}\subset\mathcal{D}$ denotes a \textbf{dyadic} system with its localized counterpart:
    \[
    \mathbb{P}(Q):=\left\{\textbf{Dyadic}\text{ sub-cubes of }Q\right\},\hspace{1.5ex}\text{where}\hspace{1.5ex}Q\in\mathcal{D}.
    \]
\end{itemize}
\end{notation}

\begin{definition}[Fractional Sparse Operator \cite{Urifra}]~\\
The fractional sparse operator is given by
\[
\mathcal{A}^\mathcal{S}_{\alpha,\nu} f(x):=\left(\underset{Q\in\mathcal{S}}{\sum}\langle f\rangle^\nu_{\alpha,Q}\cdot\chi_{Q}(x)\right)^{\frac{1}{\nu}},
\]
and
\[\langle f\rangle_{\alpha,Q}:=|Q|^{\frac{\alpha}{d}-1}\int_Qf(x)dx,
\]
where $\nu>0$, $0\leq\alpha<d$ and $\mathcal{S}$ is a sparse collection \cite{Len} of dyadic cubes in the sense that
\begin{align}
   \exists \,\Lambda_0 \in (0,1)\ s.t.\ \, \large|\bigcup_{P\in{\mathbb{P}(Q)\cap\mathcal{S}}}P\large|\leq \frac{1}{\Lambda_0}|Q|. \label{eq:2.1}
\end{align}
\end{definition}

It has been known that the operator $\mathcal{A}^\mathcal{S}_{\alpha,\nu}$ dominate some classes of classical operators \cite{Len1}. For instance, for \,$\nu=1$\, and \,$\nu=2$\, with \,$\alpha=0$\,, these operators dominate large classes of Calder\'on–Zygmund singular integrals and Littlewood–Paley square functions, respectively. For \,$\nu=1$\, with \,$0<\alpha<d$\,, \,$\mathcal{A}^\mathcal{S}_{\alpha,\nu}$\, dominate the fractional integral operator $I_\alpha$. However in this note we are unable to prove the result for all the fractional sparse operators. We require the dyadic operators to be even more sparse which we use the following definition.

\begin{definition}[N-Regular Sparse]~\\
We say that a family of sparse cubes $\mathcal{S}$ is $N$-regular if 
\[
\exists N\in \mathbb{N}\,\ s.t.\ \, \forall Q\in \mathcal{S}\implies \#Ch_{\mathcal{S}}(Q)\leq N.
\]
\end{definition}

\section{Proof of Theorem 1.1} 
\begin{proof}\,
Suppose that $\mathcal{S}$ is a family of sparse cubes satisfies $\eqref{eq:2.1}$.
Recall that
\begin{equation}
    L^1(M_\alpha w) \xrightarrow{\quad M_\alpha \quad} L^{1,\infty}(w). \label{eq:8.2}
\end{equation}
Note that $\eqref{eq:0.2}$ is equivalent to show that 
\[
w\left(\left\{x\in\mathbb{R}^d:\Lambda_1\leq\mathcal{A}^\mathcal{S}_{\alpha,\nu} f(x)<2\Lambda_1\right\}\right)\lesssim c_\phi\cdot\left\|f\right\|_{L^1(M_\alpha(M_\phi w))}.
\]

Define $\varepsilon:=\left\{\Lambda_1<\mathcal{A}^\mathcal{S}_{\alpha,\nu} f\leq2\Lambda_1\right\}-\left\{Mf>\Lambda_1^{-1}\right\}$.\\
It suffices to check that
\[
w(\varepsilon)\underset{Chebyshev}{\leq}\frac{1}{\Lambda_1}\int_\varepsilon\mathcal{A}^\mathcal{S}_{\alpha,\nu} f(x)w(x)dx\lesssim\int_{\mathbb{R}^d}|f(x)|M_\alpha(M_{\phi(L)}w)(x)dx.
\]
By getting rid of the set $\left\{M_\alpha f>\Lambda_1^{-1}\right\}$,\, we can eliminate from $\mathcal{S}$ all those cubes $Q$ such that $\langle f\rangle_{\alpha,Q}>\Lambda_1^{-1}$.\\
For $k\in\mathbb{N}$,\,define the set
\[
\mathcal{S}_k:=\{Q\in\mathcal{S}:\Lambda_1^{-k-1}<\langle f\rangle_{\alpha,Q}\leq \Lambda_1^{-k}\},
\]
and set
\[ 
\mathcal{A}^{\mathcal{S}_k}_{\alpha,\nu} f(x):=\left(\sum_{Q\in\mathcal{S}_k}\langle f\rangle^\nu_{\alpha,Q}\cdot\chi_{Q}(x)\right)^{\frac{1}{\nu}}.
\]
The critical lemma is the following:
\begin{lemma}~\\
For each $k\in\mathbb{N}$,\,there is an absolute constant C such that
\[
\int_\varepsilon\mathcal{A}^{\mathcal{S}_k}_{\alpha,\nu} f(x)w(x)dx\leq \frac{2^k}{\Lambda_1^k} w(\varepsilon)+\frac{C}{\psi^{-1}(2^{2^k})}\cdot\int_{\mathbb{R}^d}|f(x)|M_\alpha(M_{\phi(L)}w)(x)dx.
\]
\end{lemma}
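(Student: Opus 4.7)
The plan is to adapt the stopping-time argument of Domingo-Salazar--Lacey--Rey to the fractional sparse setting. Since $\nu\geq 1$, the triangle inequality $\|\cdot\|_{\ell^\nu}\leq\|\cdot\|_{\ell^1}$ gives $\mathcal{A}^{\mathcal{S}_k}_{\alpha,\nu}f(x)\leq\sum_{Q\in\mathcal{S}_k}\langle f\rangle_{\alpha,Q}\chi_Q(x)$, so by Fubini it suffices to estimate
\[
\sum_{Q\in\mathcal{S}_k}\langle f\rangle_{\alpha,Q}\,w(Q\cap\varepsilon).
\]
Two per-cube inequalities drive the argument. Orlicz--H\"older with the conjugate pair $(\phi,\psi)$ yields
\[
w(Q\cap\varepsilon)\leq 2|Q|\,\|w\|_{\phi,Q}\,\|\chi_\varepsilon\|_{\psi,Q}=\frac{2|Q|\,\|w\|_{\phi,Q}}{\psi^{-1}(|Q|/|Q\cap\varepsilon|)},
\]
while the bound $M_{\phi(L)}w\geq\|w\|_{\phi,Q}$ on $Q$ combined with the definition of $M_\alpha$ gives $|Q|^{\alpha/d}\|w\|_{\phi,Q}\leq M_\alpha(M_{\phi(L)}w)(y)$ for every $y\in Q$. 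Multiplying these produces the key per-cube estimate
\[
\langle f\rangle_{\alpha,Q}\,w(Q\cap\varepsilon)\leq\frac{2}{\psi^{-1}(|Q|/|Q\cap\varepsilon|)}\int_Q f\cdot M_\alpha(M_{\phi(L)}w).
\]

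Next I will split $\mathcal{S}_k$ by a density-doubling stopping time. Let $\mathcal{F}\subset\mathcal{S}_k$ contain the $\mathcal{S}_k$-maximal cubes and, recursively, every maximal $\mathcal{S}_k$-descendant $F'$ of some $F\in\mathcal{F}$ with $|F'\cap\varepsilon|/|F'|>2|F\cap\varepsilon|/|F|$. Write $\pi(Q)$ for the smallest element of $\mathcal{F}$ containing $Q\in\mathcal{S}_k$; by construction $|Q\cap\varepsilon|/|Q|\leq 2|\pi(Q)\cap\varepsilon|/|\pi(Q)|$. Call $F\in\mathcal{F}$ \emph{light} if $|F\cap\varepsilon|/|F|\leq 2^{-2^k}$ and \emph{heavy} otherwise. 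Since density is bounded by $1$ and doubles along principal chains, the heavy principal cubes containing any fixed point form a chain of length at most $2^k$.

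For $Q$ with light $\pi(Q)$, the per-cube estimate delivers the factor $2/\psi^{-1}(2^{2^k})$; summing across principal branches and using the $N$-regularity of $\mathcal{S}$ to cap the overlap of $\mathcal{S}_k$-cubes within a single branch recovers the second term of the lemma. For $Q$ with heavy $\pi(Q)$, the bound $\langle f\rangle_{\alpha,Q}\leq\Lambda_1^{-k}$ combined with the length-$2^k$ cap on heavy principal chains gives
\[
\sum_{Q:\,\pi(Q)\text{ heavy}}\langle f\rangle_{\alpha,Q}\,w(Q\cap\varepsilon)\leq\Lambda_1^{-k}\int_\varepsilon\bigg(\sum_{Q:\,\pi(Q)\text{ heavy}}\chi_Q\bigg)dw\lesssim\frac{2^k}{\Lambda_1^k}\,w(\varepsilon),
\]
which is precisely the first term.

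The main obstacle, I expect, is controlling the non-principal cubes: between consecutive principal cubes a long chain of intermediate $\mathcal{S}_k$-cubes may contain a given point, and bounding this pointwise overlap by an absolute constant is exactly where the $N$-regularity hypothesis is essential, allowing me to collapse each such sub-chain into a bounded count. Matching the resulting combinatorial bounds with the precise constants $2^k/\Lambda_1^k$ and $C/\psi^{-1}(2^{2^k})$ — without accumulating an $N^{2^k}$-type factor from the depth-branching product — is the delicate step of the proof.
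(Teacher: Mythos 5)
There is a genuine gap, and it sits exactly where you flagged "the delicate step": the pointwise overlap of the cubes sharing a fixed principal cube. Your heavy-part estimate needs $\sum_{Q:\,\pi(Q)\text{ heavy}}\chi_Q(x)\lesssim 2^k$ on $\varepsilon$, and your light part likewise needs the integrals $\int_Q f\cdot M_\alpha(M_{\phi(L)}w)$ to be summable over all $Q$ in a light branch, i.e.\ bounded overlap there too. The bound on the number of \emph{heavy principal} cubes through a point (at most $\approx 2^k$, by density doubling) is fine, but for a single principal $F$ the cubes $Q\in\mathcal{S}_k$ with $\pi(Q)=F$ containing a given point form a nested chain of unbounded length: the density stopping time never triggers along a chain where $|Q\cap\varepsilon|/|Q|$ stays below twice the density of $F$. $N$-regularity cannot rescue this, because it bounds the number of \emph{children} of each sparse cube (branching), not the \emph{depth} of nested chains; a family in which every cube has exactly one $\mathcal{S}_k$-child is $1$-regular yet has arbitrarily large pointwise overlap. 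So both halves of your sum are uncontrolled as written.

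The paper's mechanism for this same difficulty is different and is worth internalizing. It stratifies $\mathcal{S}_k$ by generation ($\mathcal{S}_{k,v}$), truncates each cube at depth $u=2^k$, and splits $\varepsilon\cap Q$ into a \textbf{layer part} $\bigcup_{l<u}\bigcup_{Q'}E_{Q'}$ and a \textbf{bottom part} $Q_u$. The sets $E_{Q'}=Q'\setminus\bigcup_{P\in\mathrm{Ch}(Q')}P$ are pairwise disjoint and each is hit by at most $u=2^k$ ancestors, which is precisely where the first term $2^k\Lambda_1^{-k}w(\varepsilon)$ comes from. Everything deeper than $u$ generations is swept into $Q_u$, whose relative measure is exponentially small in $u$ by sparseness, so Orlicz--H\"older applied to $w(\varepsilon\cap Q_u)$ (not to $w(\varepsilon\cap Q)$ as in your per-cube estimate) produces the factor $1/\psi^{-1}(2^{2^k})$. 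Finally, the overlapping integrals of $f$ are avoided by the comparison $\langle f\chi_{E_Q}\rangle_{\alpha,Q}\gtrsim\langle f\rangle_{\alpha,Q}$ — this, and not an overlap count, is where $N$-regularity is actually used — so that the sum runs over the disjoint sets $E_Q$. If you want to salvage your density stopping time, you would still have to import some version of this layer/bottom truncation to kill the long intermediate chains; the stopping time alone does not do it.
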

\begin{proof}
In general,\,the family of sparse cubes may not have layer structure.\,However,\,we can give it some kind of layer structure.
\begin{itemize}
    \item \textbf{Layer Decomposition}:
\end{itemize}
Write $\mathcal{S}_k$ as the union of $\mathcal{S}_{k,v}$, for $v=0,1,\cdots$, where $\mathcal{S}_{k,0}$ are the maximal elements of $\mathcal{S}_k,$ and $\mathcal{S}_{k,v+1}$ are the maximal elements of $\mathcal{S}_k\setminus\bigcup_{l=0}^v\mathcal{S}_{k,l}.$ We are free to assume that $\mathcal{S}_{k,v}=\varnothing$ if $v>4^{k+1}$.
\begin{itemize}
    \item \textbf{Pairwise Disjoint Dense Subset}:
\end{itemize}
Define that
\[
E_Q:=Q\setminus\bigcup_{P\in Ch_{\mathcal{S}_{k,v+1}}(Q)}P,\,\forall Q\in\mathcal{S}_{k,v}.
\]
Note that $E_Q\overset{disjoint}{\subset}\mathcal{S}_k$.
Set $u:=2^k$. It follows from $\eqref{eq:0.2}$ that for each $v\geq0$, and $Q\in\mathcal{S}_{k,v}$. Then
\[
Q_u:=\underset{{P\in\mathbb{P}(Q)\cap\mathcal{S}_{k,v+u}}}{\bigcup}P\underset{\eqref{eq:0.2}}{\implies} |Q_u|\leq {\Lambda_0}^{-u}|Q|.
\]
For each $Q\in\mathcal{S}_{k,v}$, we decompose the set $\varepsilon\cap Q$ into
\[
\varepsilon\cap Q=\varepsilon\cap\left(\underset{Bottom~ Part}{Q_u}\cup\underset{Layer~Part}{\bigcup_{l=0}^{u-1}\underset{{Q^\prime\in\mathbb{P}(Q)\cap\mathcal{S}_{k,v+l}}}{\bigcup} E_{Q^\prime}}\right),
\]
and hence
\begin{align}
\int_\varepsilon\mathcal{A}^{\mathcal{S}_k}_{\alpha,\nu} f(x)w(x)dx 
\underset{if~\nu\geq1}{\leq}\sum_{v=0}^{\Lambda_1^{k+2}}\sum_{Q\in\mathcal{S}_{k,v}}\langle f\rangle_{\alpha,Q}\cdot w(\varepsilon\cap Q).
\end{align}
We estimate (3.2) by spilt it into \textbf{Layer Part} and \textbf{Bottom Part}.
\begin{itemize}
    \item\textbf{Layer Part}:
\end{itemize}
\begin{align}
 \underset{\textbf{Layer~~Part}\,\,\,\,}{\sum_{v=0}^{\Lambda_1^{k+2}}\sum_{Q\in\mathcal{S}_{k,v}}\sum_{l=0}^{u-1}\sum_{Q^\prime\in\mathbb{P}(Q)\cap\mathcal{S}_{k,v+l}}\langle f\rangle_{\alpha,Q}\cdot w(\varepsilon\cap E_{Q^\prime}) }
 &\leq \Lambda_1^{-k}u\sum_{v=0}^{\Lambda_1^{k+2}}\sum_{Q\in\mathcal{S}_{k,v}}w(\varepsilon\cap E_{Q})\notag\\
 &\leq\frac{2^k}{\Lambda_1^k}w(\varepsilon).\notag
\end{align}\\

\begin{itemize}
    \item\textbf{Bottom Part}:
\end{itemize}
\begin{align}
    \forall Q\in S_{k,v}\implies \langle f\chi_{E_Q}\rangle_{\alpha,Q}
    &=\langle f\rangle_{\alpha,Q}-\underset{P\in Ch_{\mathcal{S}_{k,v+1}}(Q)}{\sum}\left(\frac{|P|}{|Q|}\right)^{1-\frac{\alpha}{d}}\langle f\rangle_{\alpha,P}\notag\\
    &"\gtrsim"\langle f\rangle_{\alpha,Q}-\Lambda_1^{-k}\left(\underset{P\in Ch_{\mathcal{S}_{k,v+1}}(Q)}{\sum}\frac{|P|}{|Q|}\right)^{1-\frac{\alpha}{d}}\notag\\
    &\geq\langle f\rangle_{\alpha,Q}-\Lambda_1^{-k}\Lambda_0^{1-\frac{\alpha}{d}}>\left(1-\Lambda_1\Lambda_0^{1-\frac{\alpha}{d}}\right)\langle f\rangle_{\alpha,Q}.\notag
\end{align}
We note that $"\gtrsim"$ holds if $\mathcal{S}$ is $N$ regular.
\begin{align}
  \implies\underset{\textbf{Bottom~~Part}}{\sum_{v=0}^{\Lambda_1^{k+2}}\sum_{Q\in\mathcal{S}_{k,v}}\langle f\rangle_{\alpha,Q}\cdot w(\varepsilon\cap Q_u)}
 &\lesssim \sum_{v=0}^{\Lambda_1^{k+2}}\sum_{Q\in\mathcal{S}_{k,v}}\int_{E_Q}f(y)dy\cdot \langle w(\varepsilon\cap Q_u)\rangle_{\alpha,Q}\notag \\
 &\lesssim \sum_{v=0}^{\Lambda_1^{k+2}}\sum_{Q\in\mathcal{S}_{k,v}}\frac{1}{\psi^{-1}(2^{2^k})}\cdot\int_{E_Q}f(x)M_\alpha(M_{\phi(L)}w)(x)dx\notag\\
 &\leq\frac{1}{\psi^{-1}(2^{2^k})}\cdot\int_{\mathbb{R}^d}f(x)M_\alpha(M_{\phi(L)}w)(x)dx.\notag
\end{align}

Thus, the proof of this lemma is complete.
\end{proof}
Now, let us finish the main theorem:
\begin{align}
 w(\varepsilon)&\leq\frac{1}{\Lambda_1}\int_\varepsilon\mathcal{A}^{\mathcal{S}}_{\alpha,\nu} f(x)w(x)dx
 =\frac{1}{\Lambda_1}\sum_{k=1}^\infty\int_\varepsilon\mathcal{A}^{\mathcal{S}_k}_{\alpha,\nu} f(x)w(x)dx\notag\\
 &\leq\frac{1}{\Lambda_1}\sum_{k=1}^\infty\left(\frac{2^k}{\Lambda_1^k}w(\varepsilon)+\frac{C}{\psi^{-1}(2^{2^k})}\cdot\int_{\mathbb{R}^d}|f(x)|\cdot M_\alpha(M_{\phi(L)}w)(x)dx\right),\notag
\end{align}
which implies that
\[
w(\varepsilon)\lesssim c_\phi\cdot\int_{\mathbb{R}^d}|f(x)|\cdot M_\alpha(M_{\phi(L)}w)(x)dx,~\forall r\geq 1,
\]
provided that 
\begin{align}
\Lambda_1>2, \,~\,
\frac{1}{\Lambda_1}\cdot\sum_{k=1}^\infty\frac{2^k}{\Lambda_1^k}<1~\text{and}~\Lambda_1\cdot\Lambda_0^{1-\frac{\alpha}{d}}<1.\label{eq:8.3}
\end{align}
Moreover, we are free to assume that $\Lambda_0$ is small enough such that $\eqref{eq:8.3}$ holds for suitable $\Lambda_1$.
\end{proof}

\end{document}